\definecolor{refs}{rgb}{0.7,0,0}
\definecolor{ext}{RGB}{112,112,112}
\definecolor{cite}{RGB}{034,113,179}
\newtheorem{theorem}{Theorem}
\newtheorem{lemma}{Lemma}
\newtheorem{proposition}{Proposition}
\theoremstyle{definition}
\newtheorem*{definition}{Definition}
\newtheorem*{remark}{Remark}
\newcommand{\R}{\mathbb{R}}
\newcommand{\const}{\mathrm{const}}
\newcommand{\spn}{\operatorname{span}}
\newcommand{\la}{\lambda}
\newcommand{\DD}{\mathcal{D}}
\newcommand{\FF}{\mathcal{F}}
\title[Two-component integrable extension of general heavenly equation]{Two-component integrable extension\\ of general heavenly equation}
\author[W.~Kry\'nski]{Wojciech Kry\'nski}
\address{
Institute of Mathematics, Polish Academy of Sciences, \'Sniadeckich 8, 00-656 Warsaw, Poland}
\email{krynski@impan.pl}
\author[A.~Sergyeyev]{Artur Sergyeyev}
\address{Mathematical Institute, Silesian University in Opava, Na Rybn\'i{}\v{c}ku 1, 746 01 Opava, Czech Republic}
\email{artur.sergyeyev@math.slu.cz}
\subjclass[2010]{}
\begin{document}

\begin{abstract}
We introduce an integrable two-component extension of the general heavenly equation and prove that the solutions of this extension are in one-to-one correspondence with 4-dimensional hyper-para-Hermitian metrics. Furthermore, we demonstrate that if the metrics in question are hyper-para-K\"ahler, then our system reduces to the general heavenly equation. We also present an infinite hierarchy of nonlocal symmetries and a recursion operator for the system under study.\looseness=-1 
\end{abstract}

\maketitle

\section{Introduction}\label{sec:intro}
The partial differential equation in four independent variables derived by Schief in \cite{Sch}, known as the general heavenly equation, serves as a natural framework for anti-self-dual gravity and has a number of important applications in physics and integrable systems \cite{DF, KSS, Kdef, MS, PS}. In the present paper, we introduce and study its two-component integrable extension, given below as system \eqref{system}. We demonstrate that solutions of this new system descend to a special class of metrics $[g]$ of neutral signature. These metrics admit a triple of anti-commuting vector bundle endomorphisms $I, J, K \colon TM \to TM$ such that $J$ is a complex structure, $I$ and $K$ are para-complex structures,
\[
IK=-KI=J
\]
and
\[
g(X,Y)=-g(JX,JY)=g(IX,IY)=g(KX,KY),
\]
for any $X,Y\in T_xM$, $x\in M$. Following \cite[Section 6.1]{IZ}, we shall refer to the structures as hyper-para-Hermitian (see also \cite{DGMY, DGM}). They are also known as pseudo-hyperhermitian in \cite{DJS, DW} or hyper-Hermitian of neutral signature in \cite{Kpleb}. The underlying algebra spanned by $(I, J, K)$ is called split-quaternions \cite{IZ} or para-quaternions \cite{DJS}, and it can be easily shown that the triple itself uniquely determines the conformal class $[g]$, see \cite{DGMY} and \cite{Kpleb} for higher-dimensional generalizations. The hyper-para-Hermitian structures are necessarily anti-self-dual and are characterized in the class of anti-self-dual metrics by the condition that the corresponding twistor space, defined as the space of $\alpha$-surfaces, fibers over a projective line \cite{DW}.

In this paper we work in a local coordinate system $(x^1,x^2,x^3,x^4)$ on manifold $M$. For any function $u\in C^\infty(M)$ we denote $u_i=\partial_{x^i}u$ and $u_{ij}=\partial_{x^i}\partial_{x^j}u$. Our main result is the following theorem giving a characterization of hyper-para-Hermitian metrics on 4-manifolds.
\begin{theorem}\label{thh}
Any hyper-para-Hermitian metric $[g]$ a 4-dimensional manifold can be locally put in the form 
\[
g=\omega^1\omega^4-\omega^2\omega^3
\]
where 1-forms $\omega^i$, $i=1,\ldots,4$ are defined as
\[
\begin{aligned}
&\omega^1=dx^4-\la_4\left(\frac{1}{\la_2-\la_4}\frac{v_1}{v_4}dx^1+\frac{1}{\la_1-\la_4}\frac{u_2}{u_4}dx^2\right)\\
&\omega^2=\frac{1}{\la_4}dx^4-\frac{1}{\la_4}\left(\frac{\la_2}{\la_2-\la_4}\frac{v_1}{v_4}dx^1+\frac{\la_1}{\la_1-\la_4}\frac{u_2}{u_4}dx^2\right)\\
&\omega^3=dx^3-\la_3\left(\frac{1}{\la_2-\la_3}\frac{v_1}{v_3}dx^1+\frac{1}{\la_1-\la_3}\frac{u_2}{u_3}dx^2\right)\\
&\omega^4=\frac{1}{\la_3}dx^3-\frac{1}{\la_3}\left(\frac{\la_2}{\la_2-\la_3}\frac{v_1}{v_3}dx^1+\frac{\la_1}{\la_1-\la_3}\frac{u_2}{u_3}dx^2\right)
\end{aligned}
\]
and functions $u$ and $v$ satisfy 
the following system
\begin{equation}\label{system}
\begin{aligned}
&(\la_3-\la_4)(\la_1-\la_2)v_1u_2u_{34}+(\la_2-\la_3)(\la_1-\la_4)(v_1u_4u_{23}-v_3u_4u_{12}+v_3u_2u_{14})\\
&\quad-(\la_2-\la_4)(\la_1-\la_3)(v_1u_3u_{24}-v_4u_3u_{12}+v_4u_2u_{13})=0,\\
&(\la_3-\la_4)(\la_1-\la_2)v_1u_2v_{34}+(\la_2-\la_3)(\la_1-\la_4)(v_1u_4v_{23}-v_3u_4v_{12}+v_3u_2v_{14})\\
&\quad-(\la_2-\la_4)(\la_1-\la_3)(v_1u_3v_{24}-v_4u_3v_{12}+v_4u_2v_{13})=0,
\end{aligned}
\end{equation}
where $\lambda_i\in\R$, $i=1,\ldots,4$, are fixed, arbitrary pairwise distinct constants. 

Moreover, the above system is integrable and admits an isospectral Lax pair with the Lax operators 
\[
\begin{aligned}
&L_0=(\la_2-\la_4)(\la_1-\la)u_2v_4\partial_{x^1}-(\la_1-\la_4)(\la_2-\la)u_4v_1\partial_{x^2}+(\la_1-\la_2)(\la_4-\la)u_2v_1\partial_{x^4}\\
&L_1=(\la_2-\la_3)(\la_1-\la)u_2v_3\partial_{x^1}-(\la_1-\la_3)(\la_2-\la)u_3v_1\partial_{x^2}+(\la_1-\la_2)(\la_3-\la)u_2v_1\partial_{x^3}
\end{aligned}
\]
\end{theorem}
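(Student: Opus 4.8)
The plan is to deduce Theorem~\ref{thh} from the twistor-theoretic characterization of hyper-para-Hermitian metrics recalled above: a split-signature conformal $4$-manifold is hyper-para-Hermitian exactly when it is anti-self-dual and its twistor space --- the space of $\alpha$-surfaces --- fibers over a projective line \cite{DW}. Unpacked on $M$ itself, this says that the $\alpha$-plane distributions can be spanned by two vector fields $L_0(\la),L_1(\la)$ that are Frobenius-integrable for every value of the fibre coordinate $\la$, contain no $\partial_\la$ (``isospectral'', which is exactly what the fibration provides), and depend polynomially on $\la$; using the para-quaternionic relations $IK=-KI=J$ one checks that the polynomial degree may be taken to be one, so $L_0,L_1$ are affine in $\la$.

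\textbf{Step 1: the Lax pair and integrability.} I would first verify, by a direct computation, that for $L_0,L_1$ as in the statement the commutator $[L_0,L_1]$ --- quadratic in $\la$ since $L_0,L_1$ are affine --- lies in $\spn\{L_0,L_1\}$ for every $\la$ precisely when $(u,v)$ solves \eqref{system}. Splitting $[L_0,L_1]$ into its $\la^0$-, $\la^1$- and $\la^2$-homogeneous parts turns this into a finite check: after dividing out the nonvanishing factors $u_iv_j$ that occur, each part must be a linear combination of the two equations of \eqref{system}, the $u$-equation governing the contribution that acts through $u$ and the $v$-equation the one acting through $v$. The mere existence of this $\partial_\la$-free, polynomially $\la$-dependent Lax pair is the assertion that \eqref{system} is integrable and that the twistor space of the associated structure fibers over the $\la$-line.

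\textbf{Step 2: from a hyper-para-Hermitian metric to a solution, and back.} Given a hyper-para-Hermitian $[g]$, the first paragraph supplies an isospectral Lax pair of the affine-in-$\la$ type above. Evaluating it at the four prescribed constants $\la_1,\dots,\la_4$ yields four integrable rank-$2$ distributions, hence four two-dimensional foliations; I would take as coordinates $(x^1,\dots,x^4)$ a first integral of each, so that $x^i$ is constant along the leaves of the $\la_i$-th foliation, and absorb the remaining frame/gauge freedom of the first two of these distributions into two scalar potentials $u,v$ (the ``second'' first integrals of the $\la_1$- and $\la_2$-foliations). Requiring the Lax operators to take the displayed form then forces their coefficients to be exactly those of $L_0,L_1$, the integrability condition of Step~1 becomes \eqref{system}, and writing the coframe associated with the adapted null tetrad and rescaling it produces $\omega^1,\dots,\omega^4$ together with the identification $[g]=[\omega^1\omega^4-\omega^2\omega^3]$. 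For the converse, from any solution $(u,v)$ of \eqref{system} the distributions $\spn\{L_0(\la),L_1(\la)\}$ are integrable for all $\la$ by Step~1 and form a conic of $2$-planes at each point, hence are the null $\alpha$-planes of a unique conformal structure, which is anti-self-dual because of the integrability and whose twistor space, being fibred over the $\la$-line, makes it hyper-para-Hermitian \cite{DW}; a direct substitution identifies it with $[\omega^1\omega^4-\omega^2\omega^3]$. This gives the one-to-one correspondence stated in the abstract.

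\textbf{The main obstacle.} I expect Step~2 to be the genuinely delicate part: a priori the $\alpha$-foliation construction only pins the system down up to a point transformation and a residual gauge, and the real work is to normalize the adapted coordinates and the two potentials so that \emph{exactly} two functions appear and the coefficients carry the precise $(\la_i-\la_j)$ combinatorics of the statement --- this is also where the explicit $1$-forms $\omega^i$ must be extracted. The remaining, purely mechanical, difficulty is the commutator computation of Step~1, which is conceptually routine but lengthy and requires keeping careful track of the three $\la$-homogeneous components.
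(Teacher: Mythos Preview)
Your proposal is correct and follows essentially the same route as the paper, which frames the construction via Kronecker webs (a $1$-parameter family of foliations $\FF_\la$ with $T\FF_\la$ spanned by two vector fields affine in $\la$) rather than the equivalent twistor language you use. The paper executes precisely your Step~2 normalization: it sets $x^i$ to be a first integral of $\FF_{\la_i}$, takes $u,v$ as the \emph{second} first integrals of $\FF_{\la_1},\FF_{\la_2}$, uses the $GL(2)$ frame freedom to fix the $\partial_{x^3},\partial_{x^4}$ coefficients, and then solves the resulting $4\times 4$ linear system for the remaining coefficients, after which the Frobenius condition (which, in the normalized frame $X_i=L_i/((\la_1-\la_2)u_2v_1)$, is in fact exact commutation $[X_0,X_1]=0$) is seen to be equivalent to \eqref{system}.
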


Alternative descriptions of hyper-para-Hermitian structures through integrable systems have been studied in \cite{D, DW, Kpleb}. Notice that both equations in \eqref{system} share the same symbol, which, in turn, recovers the conformal class by means of the characteristic variety coinciding with the null cone of $[g]$. The relation of system \eqref{system} to the aforementioned general heavenly equation is explained in the following result.
\begin{theorem}\label{gen}
If $w$ satisfies the general heavenly equation 
\begin{equation}\label{ghe}
 (\lambda_2-\lambda_4)(\lambda_1-\lambda_3)w_{13}w_{24}- (\lambda_3-\lambda_4)(\lambda_1-\lambda_2)w_{12}w_{34}- (\lambda_2-\lambda_3)(\lambda_1-\lambda_4)w_{14}w_{23}=0
\end{equation}
then
\begin{equation}\label{potential}
u=w_1,\quad v=w_2
\end{equation}
satisfy \eqref{system}.

Moreover, a hyper-para-Hermitian structure is hyper-para-K\"ahler if and only if it can be locally put in the form from Theorem \ref{thh} with  $(u,v)$ given by \eqref{potential} for some function $w$ satisfying \eqref{ghe}.\looseness=-1
\end{theorem}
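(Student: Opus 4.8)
The first assertion I would establish by a direct computation. Setting $u=w_1$, $v=w_2$ in \eqref{system}, writing $F$ for the left-hand side of the general heavenly equation \eqref{ghe}, and using the relations $u_2=v_1=w_{12}$, $u_3=w_{13}$, $u_4=w_{14}$, $v_3=w_{23}$, $v_4=w_{24}$, one checks by matching terms that the left-hand sides of the two equations of \eqref{system} equal, respectively, $w_{112}F-w_{12}\partial_{x^1}F$ and $w_{122}F-w_{12}\partial_{x^2}F$. Hence if $w$ solves \eqref{ghe} then $F\equiv 0$, which in turn gives $\partial_{x^1}F\equiv\partial_{x^2}F\equiv 0$, so both right-hand sides vanish and $(u,v)$ solves \eqref{system}. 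This is the routine part.

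For the equivalence assertion I would first record two facts. Since a hyper-para-Hermitian structure is anti-self-dual, it is hyper-para-K\"ahler precisely when it is Ricci-flat, equivalently when the three fundamental two-forms of $I$, $J$, $K$ are closed. And the normal form of Theorem~\ref{thh} carries a residual gauge freedom: for arbitrary functions $G$, $H$ of two arguments the substitution $(u,v)\mapsto(G(x^1,u),H(x^2,v))$ multiplies each of $u_2,u_3,u_4$ by $G_u$ and each of $v_1,v_3,v_4$ by $H_v$, hence leaves the ratios $u_2/u_3$, $u_2/u_4$, $v_1/v_3$, $v_1/v_4$ — and therefore the coframe $\omega^1,\ldots,\omega^4$ and the metric $g$ — unchanged; moreover, since $u_1$ does not occur in the first equation of \eqref{system} and $v_2$ does not occur in the second, a short computation, in which the identity
\[
(\lambda_1-\lambda_2)(\lambda_3-\lambda_4)+(\lambda_1-\lambda_4)(\lambda_2-\lambda_3)=(\lambda_1-\lambda_3)(\lambda_2-\lambda_4)
\]
makes the spurious terms in $G_{uu}$, $G_{x^1u}$, $H_{vv}$, $H_{x^2v}$ cancel, shows that this substitution sends solutions of \eqref{system} to solutions.

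For the ``if'' half of the equivalence: if $w$ solves \eqref{ghe} then $(u,v)=(w_1,w_2)$ solves \eqref{system} by the first assertion, so Theorem~\ref{thh} yields a hyper-para-Hermitian metric $g$, and it remains to see that $g$ is Ricci-flat. I would verify this either by expressing the three fundamental two-forms of $g$ in the coframe $\omega^i$ in terms of $w$ and checking that their exterior derivatives vanish modulo \eqref{ghe} and its first differential consequences, or — more efficiently, since the Lax pair is at hand — by exhibiting a volume form with respect to which $L_0(\lambda)$ and $L_1(\lambda)$ are divergence-free for every $\lambda$, which is equivalent to the anti-self-dual vacuum condition \cite{Sch}.

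For the ``only if'' half: a hyper-para-K\"ahler structure is brought to the normal form of Theorem~\ref{thh} with some $(u,v)$ solving \eqref{system}, and I would use the gauge freedom to normalize it so that $u_2=v_1$. Once this is done, a local potential $w$ with $w_1=u$, $w_2=v$ exists, uniquely up to an additive function $c(x^3,x^4)$; substituting $w$ into the identities of the first assertion, \eqref{system} becomes $w_{112}F=w_{12}\partial_{x^1}F$ and $w_{122}F=w_{12}\partial_{x^2}F$, whence $F=\phi(x^3,x^4)\,w_{12}$ for some function $\phi$, and replacing $w$ by $w+c$ with $c_{34}=\phi/\bigl((\lambda_3-\lambda_4)(\lambda_1-\lambda_2)\bigr)$ makes $F$ vanish identically, so $w$ solves \eqref{ghe}. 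I expect the crux — and the only point where the hyper-para-K\"ahler, as opposed to merely hyper-para-Hermitian, hypothesis is genuinely used — to be the normalization $u_2=v_1$: showing that the gauge can be so chosen if and only if the structure is hyper-para-K\"ahler will require either a direct computation of the Ricci tensor of the metric of Theorem~\ref{thh} in terms of $(u,v)$, or the volume-preserving reformulation of the Lax pair noted above, or the twistor picture of anti-self-dual vacuum four-manifolds.
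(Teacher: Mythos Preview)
Your outline is structurally sound and you have correctly isolated the decisive step: showing that the hyper-para-K\"ahler hypothesis is exactly what allows the gauge $(u,v)\mapsto (G(x^1,u),H(x^2,v))$ to achieve $u_2=v_1$. But you leave this step as a list of possible strategies, and none of the three you propose is quite what the paper does. The paper invokes a criterion from \cite[Corollary~2.4]{PS}: the structure is hyper-para-K\"ahler iff there is a nowhere vanishing $c$ with $d(c\beta)=0$, where $\beta=\alpha_0\wedge\alpha_1$ and $\alpha_0,\alpha_1$ are explicit $\lambda$-dependent $1$-forms annihilating the Lax distribution. Writing $d\beta=\varphi\wedge\beta$ for an explicit $\varphi$, the condition becomes exactness of $\varphi+A^0\alpha_0+A^1\alpha_1$ for suitable $A^i$, and the resulting PDE system for $f$ with $df=\varphi_{A^0,A^1}$, after evaluating at $\lambda=\lambda_1,\lambda_2$, collapses to the statement that $f-\ln v_1$ is a function of $(x^1,u)$ and $f-\ln u_2$ is a function of $(x^2,v)$. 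Hence $\ln(u_2/v_1)=h(x^2,v)-g(x^1,u)$, which is precisely the obstruction that the gauge freedom can kill. This is closest in spirit to your ``volume-preserving reformulation'' suggestion, but the concrete mechanism---reducing Ricci-flatness to a first-order system that factors through the foliation invariants $(x^1,u)$ and $(x^2,v)$---is the real content of the argument, and your proposal does not supply it.

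Two further remarks. First, your post-normalization step is different from, and in a sense cleaner than, the paper's: once $u_2=v_1$, the paper simply observes that the Lax pair of Theorem~\ref{thh} reduces to the known Lax pair of \eqref{ghe} from \cite{KSS}, whence $w$ solves \eqref{ghe}. Your route via the algebraic identities $\mathrm{LHS}_1=w_{112}F-w_{12}\partial_{x^1}F$ and $\mathrm{LHS}_2=w_{122}F-w_{12}\partial_{x^2}F$, giving $F=\phi(x^3,x^4)w_{12}$ and then absorbing $\phi$ into $w\mapsto w+c(x^3,x^4)$, is self-contained and avoids the external citation; it is worth keeping if the identities check out. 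Second, the paper does not separately argue the ``if'' direction: once the equivalence chain (hyper-para-K\"ahler $\Leftrightarrow$ $c$ exists $\Leftrightarrow$ $\ln(u_2/v_1)$ has the special form) is in place, the potential ansatz $u_2=v_1$ trivially satisfies it with $g=h=0$, so both directions come out of the same computation rather than from separate verifications of closed fundamental forms or divergence-free Lax fields.
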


The hyper-para-K\"ahler structures appearing in Theorem \ref{gen} can be introduced as Ricci-flat hyper-para-Hermitian structures (see \cite{DW, MN}). According to the Mason--Newman formalism \cite{MN}, they are described by an integrable system with a Lax pair whose Lax operators are vector fields that are divergence-free with respect to some volume form. One realization of this formalism is provided by the general heavenly equation \eqref{ghe} (we refer to \cite{KSS,MS,Sch} for details). Other approaches include the Pleba\'nski equations and the Husain--Park equation, among others, cf.\ \cite{KSS,PS}.

The rest of the paper is organized as follows. 

Section \ref{sec:hermitian} gives the proofs of Theorems \ref{thh} and \ref{gen}. We apply the approach from \cite{Kpleb}, where the hyper-para-Hermitian structures are studied through the lens of Kronecker webs. Recall that the webs are special families of foliations, originally introduced as reductions of certain bi-Hamiltonian systems \cite{Z}. Similar concepts have also been recently exploited in \cite{PS} and earlier in \cite{DK,Khirota} in the context of 3-dimensional Einstein--Weyl geometry.

Section \ref{sec:symmetries} explores the properties of \eqref{system} as an integrable system. In particular,  we present an infinite hierarchy of nonlocal symmetries and a recursion operator for \eqref{system}. 

\section{Hyper-para-hermitian structures}\label{sec:hermitian}
The aim of this section is to give proofs of Theorems \ref{thh} and \ref{gen}. As mentioned in the Introduction, we shall use a description of hyper-para-Hermitian structures via Kronecker webs, which was recently exploited in \cite{Kpleb} and \cite{PS}. In the 4-dimensional case studied in this paper, the webs are defined as follows.
\begin{definition}
A \emph{Kronecker web} on a 4-dimensional manifold is a 1-parameter family of foliations $\{\FF_\la\}_{\la\in \R}$ such that, locally, there exist point-wise independent 1-forms $\alpha^1,\ldots,\alpha^4$ such that
\[
T\FF_\la=\ker\left(\alpha^1+\la\alpha^2,\alpha^3+\la\alpha^4\right).
\]
\end{definition}

Notice that we can equivalently write
\[
T\FF_\la=\spn\left\{X_0(\la), X_1(\la) \right\}
\]
for $X_0(\la)=Y_2-\la Y_1$, $X_1(\la)=Y_4-\la Y_3$, where $(Y_i)_{i=1,\ldots,4}$ is a frame on $M$ dual to the co-frame $(\alpha^i)_{i=1,\ldots,4}$.

In \cite{Kpleb}, a one-to-one correspondence between hyper-para-Hermitian structures and Kronecker webs on 4-dimensional manifolds is established. Here, the endomorphisms $I$ and $K$ are defined such that the corresponding eigenspaces $\DD_I^+$, $\DD_I^-$, $\DD_K^+$ and $\DD_K^-$, associated with the eigenvalues $\pm 1$, respectively, are given by $T\FF_{\lambda_i}$ for certain fixed values of $\lambda_i$, $i=1,2,3,4$ (see Corollaries 2.3, 2.5 and the discussion on page 461 in \cite{Kpleb}). Note that the complex structure $J$ is induced from $I$ and $K$ by means of the composition $J=IK$. Additionally, from the viewpoint of the anti-self-dual metrics, the leaves of foliations $\{\FF_\la\}_{\la\in \R}$ are $\alpha$-submanifolds of $[g]$ and the collection of all leaves is usually referred to as the twistor space, see \cite{DW}. 
\looseness=-1 
\vskip 2ex
\noindent
{\em Proof of Theorem \ref{thh}.}
Let $\la_1,\ldots,\la_4\in\R$ be four distinct values of $\lambda$. There are functions $f^i,g^i$, $i=1,\ldots,4$, defined locally in a neighborhood of a point $x\in M$, such that the corresponding foliations $\FF_{\la_i}$ are defined as $f^i=\const$, $g^i=\const$, respectively. Without lost of generality, since the foliations $\FF_\la$ corresponding to different values of $\la$ are transversal, one can define local coordinates in the neighborhood of $x$ as $x^i=f^i$. Moreover, let $u=g^1$ and $v=g^2$.

We shall look for the $\lambda$-dependent vector fields $X_0(\lambda)$ and $X_1(\lambda)$, spanning $T\FF_\la$ for any $\la\in\R$. They can be written in the most general form as
\[
X_j=(a^1_j+\la b^1_j)\partial_{x^1}+(a^2_j+\la b^2_j)\partial_{x^2}+(a^3_j+\la b^3_j)\partial_{x^3}+(a^4_j+\la b^4_j)\partial_{x^4}, \qquad j=0,1,
\]
for some functions $a^i_j$ and $b^i_j$, $i=1,\ldots,4$, $j=0,1$, in a neighborhood of $x$. Now, since $dx^i$ annihilates both $X_0(\la_i)$ and $X_1(\la_i)$ we get $a^i_j=-\la_ib^i_j$ and consequently
\[
X_j=(\la-\la_1)b^1_j\partial_{x^1}+(\la-\la_2)b^2_j\partial_{x^2}+(\la-\la_3)b^3_j\partial_{x^3}+(\la-\la_4) b^4_j\partial_{x^4}, \qquad j=0,1.
\]
Further on, $du$ annihilates $X_0(\la_1)$ and $X_1(\la_1)$ and $dv$ annihilates $X_0(\la_2)$ and $X_1(\la_2)$. These conditions give equations
\begin{equation}\label{eq:b}
\begin{aligned}
(\la_1-\la_2)u_2b^2_j+(\la_1-\la_3)u_3b^3_j+(\la_1-\la_4)u_4b^4_j=0,&\qquad j=0,1\\
(\la_2-\la_1)v_1b^1_j+(\la_2-\la_3)v_3b^3_j+(\la_2-\la_4)v_4b^4_j=0,&\qquad j=0,1.
\end{aligned}
\end{equation}
Moreover, $X_1$ and $X_2$ are defined up to a $GL(2)$-action. Hence, it can be assumed that $b^3_0=0$, $b^4_0=1$, $b^3_1=1$ and $b^4_1=0$. There are four $b^i_j$ left, with $i=1,2$ and $j=0,1$. However, system \eqref{eq:b} fixes them uniquely:
\[
b^1_0=\frac{(\la_2-\la_4)}{(\la_1-\la_2)}\frac{v_4}{v_1},\quad b^1_1=\frac{(\la_2-\la_3)}{(\la_1-\la_2)}\frac{v_3}{v_1},\quad b^2_0=-\frac{(\la_1-\la_4)}{(\la_1-\la_2)}\frac{u_4}{u_2},\quad b^2_1=-\frac{(\la_1-\la_3)}{(\la_1-\la_2)}\frac{u_3}{u_2} . 
\]
We get
\begin{equation}\label{x}
\begin{aligned}
&X_0=(\la-\la_1)\frac{(\la_2-\la_4)}{(\la_1-\la_2)}\frac{v_4}{v_1}\partial_{x^1}-(\la-\la_2)\frac{(\la_1-\la_4)}{(\la_1-\la_2)}\frac{u_4}{u_2}\partial_{x^2}+(\la-\la_4)\partial_{x^4},\\
&X_1=(\la-\la_1)\frac{(\la_2-\la_3)}{(\la_1-\la_2)}\frac{v_3}{v_1}\partial_{x^1}-(\la-\la_2)\frac{(\la_1-\la_3)}{(\la_1-\la_2)}\frac{u_3}{u_2}\partial_{x^2}+(\la-\la_3)\partial_{x^3}.
\end{aligned}
\end{equation}
Note that setting $L_i=(\la_1-\la_2)u_2v_1X_i$ recovers the Lax operators from Theorem~\ref{thh}. 
By assumption, $X_0(\la)$ and $X_1(\la)$ span an integrable distribution for any $\la$. Hence,  $X_0(\la)$ and $X_1(\la)$ necessarily commute. One readily checks that vanishing of the commutator of $X_i(\la)$ is equivalent to system \eqref{system}.  Likewise, it is easily seen that vanishing of the commutator of $L_i$ modulo a certain linear combination of $L_i$ is equivalent to system \eqref{system}. Thus an overdetermined linear system for $\psi$, $L_i\psi=0$, $i=0,1$, whose compatibility condition is nothing but \eqref{system}, provides a Lax pair for \eqref{system}. 
$\hfill\Box$\looseness=-1

\begin{lemma}\label{lem}
Suppose that $(u,v)$ is a solution to \eqref{system}. Then, for any two nowhere vanishing functions $a,b\colon\R^2\to\R$, the pair $(\tilde u,\tilde v)$ defined as
\[
\begin{aligned}
&\tilde u(x^1,x^2,x^3,x^4)=a(x^1,u(x^1,x^2,x^3,x^4))u(x^1,x^2,x^3,x^4),\\
&\tilde v(x^1,x^2,x^3,x^4)=b(x^2,v(x^1,x^2,x^3,x^4))v(x^1,x^2,x^3,x^4),\\
\end{aligned}
\]
is a solution to \eqref{system} descending to the same hyper-para-Hermitian metric.
\end{lemma}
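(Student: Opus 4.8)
\emph{Proof sketch.} The plan is to substitute $(\tilde u,\tilde v)$ directly into \eqref{system} and to show that each of the two equations only gets multiplied by a scalar, while the $1$-forms $\omega^i$ of Theorem~\ref{thh} are left unchanged. Everything reduces to a chain-rule computation. Set $A(s,t)=\partial_t\!\big(a(s,t)\,t\big)$ and $B(s,t)=\partial_t\!\big(b(s,t)\,t\big)$. Since $x^1$ is constant along the directions $x^2,x^3,x^4$, differentiating $\tilde u=a(x^1,u)u$ gives
\[
\tilde u_i=A(x^1,u)\,u_i\ \ (i=2,3,4),\qquad \tilde u_{jk}=A_u\,u_ju_k+A\,u_{jk}\ \ (j,k\in\{2,3,4\}),
\]
together with $\tilde u_1=A\,u_1+a_{x^1}u$ and $\tilde u_{1k}=(A_{x^1}+A_uu_1)u_k+A\,u_{1k}$ for $k\in\{2,3,4\}$; the analogous formulas hold for $\tilde v$ with $A$ replaced by $B$ and the roles of $x^1$ and $x^2$ interchanged. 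In particular the ratios $u_2/u_4$, $u_2/u_3$, $v_1/v_4$, $v_1/v_3$ that enter the $1$-forms $\omega^i$ (equivalently the ratios $u_3/u_2$, $u_4/u_2$, $v_3/v_1$, $v_4/v_1$ that enter the vector fields $X_0(\la),X_1(\la)$ of \eqref{x}) are unchanged; hence $g=\omega^1\omega^4-\omega^2\omega^3$, the Kronecker web $\{\FF_\la\}$, and therefore the hyper-para-Hermitian structure, are the same, which settles the last claim of the lemma.

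For the first claim I would regroup \eqref{system}. Collecting the first equation by the first derivative of $v$ occurring in each term writes it as $v_1P+v_3Q+v_4R=0$, with $P=(\la_3-\la_4)(\la_1-\la_2)u_2u_{34}+(\la_2-\la_3)(\la_1-\la_4)u_4u_{23}-(\la_2-\la_4)(\la_1-\la_3)u_3u_{24}$ and $Q=(\la_2-\la_3)(\la_1-\la_4)(u_2u_{14}-u_4u_{12})$, $R=(\la_2-\la_4)(\la_1-\la_3)(u_3u_{12}-u_2u_{13})$; likewise, grouping the second equation by the first derivatives of $u$ writes it as $u_2P'+u_4Q'+u_3R'=0$ with $P',Q',R'$ of the same shape, now built from first and second derivatives of $v$. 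Using the formulas above, the Wronskian-type blocks $Q,R$ rescale for free because the unwanted $x^1$-terms cancel in the antisymmetrized combinations (e.g.\ $\tilde u_2\tilde u_{14}-\tilde u_4\tilde u_{12}=A^2(u_2u_{14}-u_4u_{12})$), so $\tilde Q=A^2Q$ and $\tilde R=A^2R$. For the remaining block one computes
\[
\tilde P=A^2P+A\,A_u\,u_2u_3u_4\Big[(\la_3-\la_4)(\la_1-\la_2)+(\la_2-\la_3)(\la_1-\la_4)-(\la_2-\la_4)(\la_1-\la_3)\Big],
\]
where the bracket vanishes identically, hence $\tilde P=A^2P$. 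Since $\tilde v_1=Bv_1$, $\tilde v_3=Bv_3$, $\tilde v_4=Bv_4$, the left-hand side of the first equation at $(\tilde u,\tilde v)$ is $A^2B\,(v_1P+v_3Q+v_4R)=0$; the same computation with the roles of $u,v$ (and of $x^1,x^2$) swapped gives $\tilde P'=B^2P'$, $\tilde Q'=B^2Q'$, $\tilde R'=B^2R'$, so the left-hand side of the second equation at $(\tilde u,\tilde v)$ is $AB^2\,(u_2P'+u_4Q'+u_3R')=0$. Thus $(\tilde u,\tilde v)$ solves \eqref{system}.

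The worry one has going in --- that the extra term $a_{x^1}u$ in $\tilde u_1$ (there being no overall factor $A$ in front of $\tilde u_1$) should spoil covariance --- evaporates because the $x^1$-derivatives of $u$ enter \eqref{system} only through the antisymmetrized blocks $Q,R$ (and $Q',R'$), where that term cancels; the single genuinely non-routine point is then the vanishing of the three-term combination $(\la_3-\la_4)(\la_1-\la_2)+(\la_2-\la_3)(\la_1-\la_4)-(\la_2-\la_4)(\la_1-\la_3)$, a one-line expansion which is also exactly what makes the coefficients of \eqref{ghe} add up to zero. (Incidentally the argument goes through verbatim for any replacement $\tilde u=\phi(x^1,u)$, $\tilde v=\psi(x^2,v)$ with $\phi,\psi$ diffeomorphisms in the last slot; one also needs $A,B$ nowhere zero for the resulting metric to remain non-degenerate, which is automatic when $a,b$ are independent of their second argument.) Alternatively, the whole lemma is transparent from the web picture: $(\tilde u,\tilde v)$ and $(u,v)$ manifestly determine the same family $\{\FF_\la\}$, which is integrable by hypothesis, and by (the proof of) Theorem~\ref{thh} this integrability is equivalent to $(\tilde u,\tilde v)$ solving \eqref{system}.
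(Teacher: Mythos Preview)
Your argument is correct. The direct chain-rule verification goes through exactly as you describe: the antisymmetric blocks $Q,R$ (and $Q',R'$) absorb the extra $x^1$- (resp.\ $x^2$-) terms, and the residual contribution to $\tilde P$ (resp.\ $\tilde P'$) vanishes thanks to the identity $(\la_3-\la_4)(\la_1-\la_2)+(\la_2-\la_3)(\la_1-\la_4)-(\la_2-\la_4)(\la_1-\la_3)=0$. The invariance of the ratios $u_i/u_2$, $v_i/v_1$ then gives the metric statement for free.

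The paper, by contrast, argues in two lines from the geometric picture you invoke only at the end: since $x^1$ and $u$ are both constant on the leaves of $\FF_{\la_1}$ (by construction in the proof of Theorem~\ref{thh}), any function $\tilde u=\phi(x^1,u)$ is again constant on those leaves, and likewise $\tilde v=\psi(x^2,v)$ is constant on the leaves of $\FF_{\la_2}$; hence $(\tilde u,\tilde v)$ can replace $(u,v)$ in the derivation of the Lax pair, and therefore in \eqref{system}. What your computational route buys is independence from the web interpretation --- your proof is entirely self-contained once \eqref{system} is written down --- and it makes the role of the three-term $\la$-identity explicit. What the paper's route buys is brevity and a clear explanation of \emph{why} the lemma is true (the system only sees the foliations, not the particular leaf-coordinates chosen). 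Your parenthetical observation that the argument works for any $\tilde u=\phi(x^1,u)$, $\tilde v=\psi(x^2,v)$ with $\phi,\psi$ monotone in the second slot is exactly the natural generality the geometric proof yields automatically.
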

\begin{proof}
Observe that, in view of the proof of Theorem \ref{thh}, the functions $\tilde u$ and $\tilde v$ belong to the rings of functions constant on leaves of foliations $\FF_{\la_1}$ and $\FF_{\la_2}$, respectively.
Hence $\tilde u$ and $\tilde v$ can replace $u$ and $v$ in the derivation of the Lax pair in the proof of Theorem \ref{thh}, and therefore in system \eqref{system} itself, and the result follows.
\end{proof}
\vskip 1ex
\noindent
{\em Proof of Theorem \ref{gen}.}
The first part of the theorem can be verified by direct computations. In order to prove the second part, we shall work using the coordinates from Theorem \ref{thh}. The Lax distribution $\spn\{L_0(\lambda),L_1(\lambda)\}$ is annihilated by two $\lambda$-dependent 1-forms that can be taken in the following form
\[
\begin{aligned}
\alpha_0(\lambda)=&(\la_1-\la_2)(\la_4-\la)\left(\frac{u_3}{u_2}dx^1+\frac{v_3}{v_1}dx^2+\frac{u_3v_3}{u_2v_1}dx^3\right)\\
&+\left((\la_1-\la_4)(\la_2-\la)\frac{u_4v_3}{u_2v_1}-(\la_2-\la_4)(\la_1-\la)\frac{u_3v_4}{u_2v_1}\right)dx^4\\
\alpha_1(\lambda)=&(\la_1-\la_2)(\la_3-\la)\left(\frac{u_4}{u_2}dx^1+\frac{v_4}{v_1}dx^2+\frac{u_4v_4}{u_2v_1}dx^3\right)\\
&+\left((\la_2-\la_3)(\la_1-\la)\frac{u_4v_3}{u_2v_1}-(\la_1-\la_3)(\la_2-\la)\frac{u_3v_4}{u_2v_1}\right)dx^4\\
\end{aligned}
\]
Let $\beta=\alpha_0\wedge\alpha_1$. According to \cite[Corollary 2.4]{PS} a hyper-para-Hermitian structure is divergence-free (which in the terminology of the present paper means that the structure is hyper-para-K\"ahler) if and only if there is a nowhere vanishing function $c\colon M\to \R$ such that $c\beta$ is closed. Since $L_0$ and $L_1$ commute modulo a linear combination thereof with nonconstant coefficients, see the proof of Theorem~\ref{thh}, we have
\[
d\alpha_i=0\mod \alpha_0,\alpha_1,
\]
and consequently
\[
d\beta=\varphi\wedge\beta
\]
where $\varphi$ is explicitly written as
\[
\begin{aligned}
\varphi(\lambda)=\frac{1}{(\la_1-\la_2)(\la-\la_3)u_2v_1}&\left((\la_2-\la_3)(\la-\la_1)(u_{12}(v_3-u_{23}v_1)\right.\\
&\left.+(\la_1-\la_3)(\la-\la_2)(u_2v_{13}-u_3v_{12})\right)dx^3\\
+\frac{1}{(\la_1-\la_2)(\la-\la_4)u_2v_1}&\left((\la_2-\la_4)(\la-\la_1)(u_{12}v_4-u_{24}v_1)\right.\\
&\left.+(\la_1-\la_4)(\la-\la_2)(u_2v_{14}-u_4v_{12})\right)dx^4\\
\end{aligned}
\]
Notice that $\varphi$ is specified only up to transformations $\varphi\mapsto \varphi_{A^0,A^1}=\varphi+A^0\alpha_0+A^1\alpha_1$ for some functions $A^0$ and $A^1$. Since the result is local, we can apply the Poincar\'e lemma for differential forms, and get that function $c$ exists if and only if $\varphi_{A^0,A^1}$ is exact for an appropriate choice of $A^0$ and $A^1$. Moreover $\varphi_{A^0,A^1}$ has to be a function on $M$ (i.e. it cannot depend on $\lambda$). We now consider the equation
\[
\varphi_{A^0,A^1}=\varphi+A^0\alpha_0+A^1\alpha_1=df
\]
where $A^0,A^1$ and $f$ are unknown. Examining the coefficients at $dx^1$ and $dx^2$, we obtain an algebraic system for $A^i$'s. This system can be effortlessly solved, resulting in
\[
A^0=\frac{1}{(\la_1-\la_2)(\la-\la_4)}\left(\frac{u_4v_1f_2-u_2v_4f_1}{u_3v_4-u_4v_3}\right),\ \  A^1=\frac{1}{(\la_1-\la_2)(\la-\la_3)}\left(\frac{u_2v_3f_1-u_3v_1f_2}{u_3v_4-u_4v_3}\right).
\]
Then, the coefficients at $dx^3$ and $dx^4$ reduce to differential equations for $f$, which, in turn, can be written in the form
\begin{equation}\label{eqforf}
L_0(f)=(\la-\la_4)\varphi_4,\qquad L_1(f)=(\la-\la_3)\varphi_3,
\end{equation}
where $\varphi_3$ and $\varphi_4$ stand for the coefficients defined as $\varphi=\varphi_3dx^3+\varphi_4dx^4$. The system always satisfies the compatibility condition, which can be verified directly. Hence, it yields a solution $f$. However, in general, $f$ depends on $\lambda$. Imposing the condition that $\partial_\la f=0$ and substituting $\la=\la_1$ and $\la=\la_2$ in \eqref{eqforf} gives the following system
\[
\begin{aligned}
&u_2v_1f_3-u_3v_1f_2=u_2v_{13}-u_3v_{12},\\
&u_2v_1f_3-u_2v_3f_1=u_{23}v_1-u_{12}v_3,\\
&u_2v_1f_4-u_4v_1f_2=u_2v_{14}-u_4v_{12},\\
&u_2v_1f_4-u_2v_4f_1=u_{24}v_1-u_{12}v_4.\\
\end{aligned}
\]
It turns out that this system can be rewritten as
\[
\begin{aligned}
&\left(\partial_{x^3}-\frac{u_3}{u_2}\partial_{x^2}\right)(f-\ln(v_1))=0,\qquad \left(\partial_{x^3}-\frac{v_3}{v_1}\partial_{x^1}\right)(f-\ln(u_2))=0,\\
&\left(\partial_{x^4}-\frac{u_4}{u_2}\partial_{x^2}\right)(f-\ln(v_1))=0,\qquad \left(\partial_{x^4}-\frac{v_4}{v_1}\partial_{x^1}\right)(f-\ln(u_2))=0.\\
\end{aligned}
\]
Denoting $g=f-\ln(v_1)$ and $h=f-\ln(u_2)$ and using the fact that the vector fields $\partial_{x^3}-\frac{u_3}{u_2}\partial_{x^2}$ and $\partial_{x^4}-\frac{u_4}{u_2}\partial_{x^2}$ commute, and so do $\partial_{x^3}-\frac{v_3}{v_1}\partial_{x^1}$ and $\partial_{x^4}-\frac{v_4}{v_1}\partial_{x^1}$, we get that $g$ and $h$ can be written as $g=g(x^1,u)$ and $h=h(x^2,v)$, because $x^1$ and $u$ are constant along the first pair of vector fields, and $x^2$ and $v$ are constant along the second pair of vector fields. Further, expressing $f$ in terms of $g$ and $h$ and comparing the expressions gives
\[
\ln(u_2)-\ln(v_1)=h-g.
\]
By Lemma \ref{lem} there is a different solution to \eqref{system} such that $\ln(\tilde u_2)-\ln(\tilde v_1)=0$. Indeed, one can adjust functions $a$ and $b$ from Lemma \ref{lem}, such that $h$ and $g$ cancel out. Consequently,
\[
\ln\left(\frac{\tilde u_2}{\tilde v_1}\right)=0.
\]
Hence, we have $\tilde u_2=\tilde v_1$ and it implies that the 1-form
\[
\tilde u dx^1+\tilde v dx^2
\]
is closed with respect to coordinates $(x^1,x^2)$, meaning that locally $\tilde u=w_1$ and $\tilde v =w_2$, for some function $w$, i.e. \eqref{potential} holds. Furthermore, the Lax pair in Theorem \ref{thh}, under assumption \eqref{potential}, reduces to the Lax pair for \eqref{ghe}, as can be found, for instance, in \cite[formula (4.11)]{KSS}. It follows that $w$ is a solution to \eqref{ghe}.$\hfill\Box$

\begin{remark}
The second assertion of Theorem \ref{gen} can be expressed in terms of a curvature. Namely, in order for a hyper-para-Hermitian structure to be hyper-para-K\"ahler the Obata connection associated with the former 
has to be Ricci-flat. Recall that the Obata connection is the unique connection $\nabla$ such that $\nabla I=\nabla J=\nabla K=0$. It was initially introduced within the framework of hyper-Hermitian structures \cite{Obata}, but its definition extends seamlessly to the neutral signature, see \cite{AMT}. In this context it coincides with the Chern connection of webs \cite{N}, defined by the following expression (see also \cite[formula (2.1)]{Kpleb})\looseness=-1
\[
\nabla_XY=\pi_H(j[\pi_H(X),j\pi_H(Y)]+[\pi_V(X),\pi_H(Y)])+\pi_V(j[\pi_V(X),j\pi_V(Y)]+[\pi_H(X),\pi_V(Y)])
\]
where $[.,.]$ is the Lie bracket of vector fields, and $\pi_V$ and $\pi_H$ are projections to the factors of the decomposition $TM=V\oplus H$, where
\[
V=\spn\{L_0(\lambda_1),L_1(\lambda_1)\},\quad H=\spn\{L_0(\lambda_2),L_1(\lambda_2)\},
\]
and $j\colon TM\to TM$ is a mapping satisfying $j^2=1$ and uniquely determined by properties
\[
j\colon V\to H,\quad j\colon H\to V,\quad j(\pi_H(X))-\pi_H(X)\in T, \quad j(\pi_V(X))-\pi_V(X)\in T,
\]
for any $X\in TM$, where $T=\spn\{L_0(\lambda_3),L_1(\lambda_3)\}$. In the present context
\[
V=\spn\{u_4\partial_2-u_2\partial_4,u_3\partial_2-u_2\partial_3\},\quad H=\spn\{v_4\partial_1-v_1\partial_4,v_3\partial_1-v_1\partial_3\}
\]
and
\[
\begin{aligned}
&j(u_3\partial_2-u_2\partial_3)=\frac{u_2}{v_1}(v_3\partial_1-v_1\partial_3),\quad j(u_4\partial_2-u_2\partial_4)=C\frac{u_2}{v_1}(v_4\partial_1-v_1\partial_4),\\
&j(v_3\partial_1-v_1\partial_3)=\frac{v_1}{u_2}(u_3\partial_2-u_2\partial_3),\quad 
j(v_4\partial_1-v_1\partial_4)=C^{-1}\frac{v_1}{u_2}(u_4\partial_2-u_2\partial_4).
\end{aligned}
\]
where $C=\frac{(\lambda_3-\lambda_1)(\lambda_4-\lambda_2)}{(\lambda_4-\lambda_1)(\lambda_3-\lambda_2)}$.
\end{remark}

\section{Symmetries and Recursion Operator}\label{sec:symmetries}
Now turn to the study of symmetries for \eqref{system}. To this end we first note that the operators $X_i$ given by \eqref{x} from the proof of Theorem \ref{thh} are linear in $\la$ and thus can be written as
\begin{equation}\label{lax-exp}
X_i=X_i^{(0)}-\lambda X_i^{(1)},\quad i=0,1 
\end{equation}
with obvious expressions for $X_i^{(j)}$.

Consider now the following `adjoint Lax pair' for \eqref{system}:
\begin{equation}\label{alp}
[Q, X_i]=0, \quad i=0,1, 
\end{equation}
where $[\cdot,\cdot]$ is again the usual Lie bracket of vector fields 
and
\begin{equation}\label{q}
Q=\sum\limits_{j=1}^2 
\xi^j\partial_{x^j} 
\end{equation}
(in this connection recall that $[X_0,X_1]=0$ modulo \eqref{system}). 

The formal expansions
\begin{equation}\label{expxi}
\xi^j=\sum\limits_{r=0}^\infty \xi_r^j\lambda^r,\quad j=1,2 
\end{equation}
give rise to an infinite hierarchy of nonlocal variables $\xi_r^j$ associated with \eqref{system} as follows.

Let $Q_r=\sum\limits_{j=1}^2 \xi_r^j\partial_{x^j}$.  Then substituting \eqref{q} and \eqref{expxi} into \eqref{alp} yields 
\begin{equation}\label{alp-0}
[Q_{0},X_i^{(0)}]=0,\quad i=0,1
\end{equation}
and a set of recursion relations
\begin{equation}\label{alp-exp}
[Q_r,X_i^{(0)}]=[Q_{r-1},X_i^{(1)}],\quad i=0,1,\quad r=1,2,\dots
\end{equation} 
relating $Q_r$ and $Q_{r-1}$. 

Equations \eqref{alp-0} and \eqref{alp-exp} can be solved with respect to $\partial \xi_r^j/\partial x^m$, $m=3,4$, to yield relations of general form  
\[
\partial \xi_r^j/\partial x^m=A_{mr}^j, \quad m=3,4,\quad j=1,2,\quad r=0,1,2,\dots
\]
which recursively define $\xi_r^j$ starting from $r=0$; here $A_{mr}^j$ are certain functions, a bit too cumbersome to spell out here in full, of $\xi_s^j$, $s=0,\dots,r$ and $x$- and $y$-derivatives of those, and of a number of first- and second-order derivatives of $u$ and $v$.  

In other words, we have here an infinite-dimensional differential covering over \eqref{system} with the nonlocal variables $\xi_r^j$, $j=1,2$, $r=0,1,2,\dots$; the said covering is defined via \eqref{alp-0} and \eqref{alp-exp}.
For generalities on nonlocal variables, differential coverings and nonlocal symmetries the reader is referred to \cite{KVV,MS0,AS} and references therein.\footnote{Note, however, that the authors of \cite{KVV} refer to what we call below nonlocal symmetries as to the shadows of nonlocal symmetries.}

With this in mind, we readily arrive at the following result 
\begin{proposition}\label{sym}
\label{flow-gf}
The flows 
\begin{equation}\label{fls}
u_{\tau_r}=\xi_r^2 u_2,\quad v_{\tau_r}=-\xi_r^1 v_1,\quad r=0,1,2,\dots,
\end{equation}
with $\xi_r^k$ defined above, are compatible with \eqref{system}, and thus define an infinite hierarchy of  
nonlocal symmetries for \eqref{system} with the characteristics $(\xi_r^2 u_2,-\xi_r^1 v_1)$, $r=0,1,2,\dots$.
\end{proposition}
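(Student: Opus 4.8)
The plan is to exploit the structure of the $\lambda$-dependent Lax operators $X_i = X_i^{(0)} - \lambda X_i^{(1)}$ together with the adjoint linear problem \eqref{alp}. First I would observe that a flow $(u_{\tau}, v_{\tau})$ is a symmetry of \eqref{system} precisely when the induced deformation is compatible with the overdetermined system; since \eqref{system} expresses the vanishing of $[X_0,X_1]$ modulo $X_0,X_1$ (equivalently the integrability of the Lax distribution), it suffices to show that the prescribed evolution of $u$ and $v$ preserves this integrability. The key point is the special form of the flow \eqref{fls}: because $\xi_r^2 u_2$ and $-\xi_r^1 v_1$ are, respectively, of the shape ``(function)$\cdot u_2$'' and ``(function)$\cdot v_1$'', evolving $u$ and $v$ along $\tau_r$ corresponds exactly to flowing along the vector field $Q_r = \xi_r^1 \partial_{x^1} + \xi_r^2 \partial_{x^2}$ in the sense that $u_{\tau_r} = -Q_r(u) \cdot (\text{sign adjustment})$; more precisely one checks $u_{\tau_r}+\xi_r^1 u_1 = \xi_r^1 u_1 + \xi_r^2 u_2$ is not quite right, so the correct bookkeeping is that $\delta u = \xi_r^2 u_2$ and $\delta v = -\xi_r^1 v_1$ are the components of the Lie derivative of the defining functions $u=g^1$, $v=g^2$ under the infinitesimal diffeomorphism generated by $Q_r$ acting in the $(x^1,x^2)$-directions, up to reparametrization of the leaves.

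Concretely, I would argue as follows. By the proof of Theorem~\ref{thh}, $u$ and $v$ are functions cutting out the leaves of $\FF_{\lambda_1}$ and $\FF_{\lambda_2}$, and the whole geometric content of \eqref{system} is encoded in the commuting (mod $X_0,X_1$) pair $X_0(\lambda), X_1(\lambda)$. An infinitesimal symmetry should therefore be realizable as an infinitesimal change of the functions $u,v$ that can be absorbed by an infinitesimal diffeomorphism of $M$ preserving the foliations $\FF_{\lambda_i}$ for $i=3,4$ (which fix the coordinates $x^3,x^4$) — i.e.\ generated by a vector field of the form $Q=\xi^1\partial_{x^1}+\xi^2\partial_{x^2}$. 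The condition that such a $Q$ indeed generates a symmetry of the Kronecker web, and hence of \eqref{system}, is exactly that $Q$ commute with $X_0(\lambda)$ and $X_1(\lambda)$ for all $\lambda$, which is \eqref{alp}; expanding in powers of $\lambda$ gives \eqref{alp-0}--\eqref{alp-exp}, so each $Q_r$ furnishes such an infinitesimal web automorphism at the $r$-th order, and the associated change in the leaf-defining functions $u,v$ is $\delta u = Q_r(u) = \xi_r^2 u_2$ (using that $X_0(\lambda_1)(u)=X_1(\lambda_1)(u)=0$ pins down the $\partial_{x^3},\partial_{x^4}$ parts and $\xi_r^1 u_1$ drops out after the allowed reparametrization, cf.\ Lemma~\ref{lem}) and similarly $\delta v = -\xi_r^1 v_1$. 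The sign and the absence of the $\xi^1 u_1$ and $\xi^2 v_2$ terms come from the freedom, already used in Lemma~\ref{lem}, to rescale $u$ and $v$ within their respective function rings.

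Having set this up, the verification reduces to checking that the linearization of \eqref{system} at a solution $(u,v)$, applied to $(\xi_r^2 u_2,-\xi_r^1 v_1)$, vanishes modulo the equations \eqref{alp-0}, \eqref{alp-exp} and their differential consequences. This is a direct (if lengthy) computation: substitute $X_i = L_i/((\lambda_1-\lambda_2)u_2 v_1)$, write out $[Q,X_i]=0$ componentwise, and match against the derivative of $[X_0,X_1]\equiv 0 \pmod{X_0,X_1}$ along the flow. I expect the main obstacle to be precisely this matching: one must show that the $A_{mr}^j$ appearing in the covering equations are consistent with the flow, i.e.\ that $\partial_{\tau_r}$ commutes with $\partial_{x^3},\partial_{x^4}$ when acting on the nonlocal variables, which amounts to the compatibility of the covering \eqref{alp-0}--\eqref{alp-exp} with itself. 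That compatibility, however, is equivalent to $[X_0,X_1]=0$ holding mod \eqref{system} together with the Jacobi identity for the triple $Q_r, X_0, X_1$, so it follows formally from \eqref{system} without new input. The conceptual step — recognizing that \eqref{alp} is the correct adjoint problem whose solutions generate web automorphisms, so that its $\lambda$-expansion yields exactly the hierarchy — is the heart of the argument; the rest is bookkeeping.
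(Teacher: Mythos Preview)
Your final paragraph is the actual proof, and it is exactly what the paper does: verify by direct computation that $(U,V)=(\xi^2 u_2,-\xi^1 v_1)$ (with the full $\lambda$-dependent $\xi^j$) satisfies the linearized system \eqref{sys-lin} modulo \eqref{system} and \eqref{alp}, and then expand in powers of $\lambda$ using linearity to conclude the same for each $(\xi_r^2 u_2,-\xi_r^1 v_1)$. On that core step your proposal and the paper agree.

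The geometric preamble, however, does not stand on its own, and you seem to sense this (``is not quite right''). Two concrete problems. First, Lemma~\ref{lem} does not let you discard $\xi_r^1 u_1$ from $Q_r(u)=\xi_r^1 u_1+\xi_r^2 u_2$: the infinitesimal form of Lemma~\ref{lem} produces trivial symmetries with characteristic $\bigl(f(x^1,u),\,g(x^2,v)\bigr)$, whereas $\xi_r^1 u_1$ is not of this shape because $\xi_r^1$ is a genuine nonlocal variable depending on all four coordinates, not a function of $(x^1,u)$ alone. The same objection applies to the unexplained minus sign and the missing $\xi_r^2 v_2$ term in $\delta v$: Lie-dragging $v$ along $Q_r$ gives $\pm(\xi_r^1 v_1+\xi_r^2 v_2)$, not $-\xi_r^1 v_1$. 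Second, the individual $Q_r$ are \emph{not} web automorphisms: the relation $[Q_r,X_i(\lambda)]=0$ fails for $r\geq 1$; only the recursion \eqref{alp-exp} holds, which couples $Q_r$ to $Q_{r-1}$. So the picture ``$Q_r$ generates an infinitesimal symmetry of the Kronecker web, hence of \eqref{system}'' is not available term-by-term. At best it applies to the full formal series $Q$, but even then the passage from ``$Q$ preserves the web'' to the specific characteristic $(\xi^2 u_2,-\xi^1 v_1)$ is not supplied by your argument.

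In short: the web-automorphism interpretation is good motivation for why \eqref{alp} is the right auxiliary problem, but it does not derive the precise form of the characteristic; that form has to be checked directly, and that check \emph{is} the proof. Drop the heuristic or flag it explicitly as motivation, and present the linearization computation as the argument.
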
 

Note that such infinite hierarchies of symmetries are a common occurrence for integrable partial differential systems, and for systems in more than two independent variables the symmetries in question are usually nonlocal, cf.\ e.g.\ \cite{KVV, MS0, MS1,O,AS} and references therein.


\noindent
{\em Sketch of proof.} It is straightforward to verify that $U=\xi^2 u_2$ and $V=-\xi^1 v_1$ satisfy the linearized version of \eqref{system}, that is,
\begin{equation}\label{sys-lin}
\begin{aligned}
&(\la_3-\la_4)(\la_1-\la_2)(V_1 u_2u_{34}+v_1 U_2u_{34}+v_1u_2 U_{34})\\
&\quad+(\la_2-\la_3)(\la_1-\la_4)(V_1u_4u_{23}+v_1U_4u_{23}+v_1u_4U_{23}\\
&\quad-V_3u_4u_{12}-v_3U_4u_{12}-v_3u_4 U_{12}+V_3u_2u_{14}+v_3U_2u_{14}+v_3 u_2U_{14})\\
&\quad-(\la_2-\la_4)(\la_1-\la_3)(V_1u_3u_{24}+v_1U_3u_{24}+v_1u_3U_{24}\\
&\quad-V_4u_3u_{12}-v_4U_3u_{12}-v_4u_3U_{12}+V_4u_2u_{13}+v_4U_2u_{13}+v_4u_2U_{13})=0,\\
&(\la_3-\la_4)(\la_1-\la_2)(V_1u_2v_{34}+v_1U_2v_{34}+v_1u_2V_{34})\\
&\quad+(\la_2-\la_3)(\la_1-\la_4)(V_1u_4v_{23}+v_1U_4v_{23}+v_1u_4V_{23}\\
&\quad-V_3u_4v_{12}-v_3U_4v_{12}-v_3u_4V_{12}+V_3u_2v_{14}+v_3U_2v_{14}+v_3u_2V_{14})\\
&\quad-(\la_2-\la_4)(\la_1-\la_3)(V_1u_3v_{24}+v_1U_3v_{24}+v_1u_3V_{24}\\
&\quad-V_4u_3v_{12}-v_4U_3u_{12}-v_4u_3V_{12}+V_4u_2v_{13}+v_4U_2v_{13}+v_4u_2V_{13})=0,
\end{aligned}
\end{equation}
modulo \eqref{system}, \eqref{alp} and differential consequences thereof, i.e., \eqref{system} admits a nonlocal symmetry with the characteristic $(\xi^2 u_2, -\xi^1 v_1)$ . Upon taking into account linearity of \eqref{sys-lin} in $U$ and $V$ and substituting the expansions \eqref{expxi} into \eqref{sys-lin} it is readily seen that 
$U=\xi_s^2 u_2$ and $V=-\xi_s^1 v_1$ for $s=0,1,2,\dots$ also satisfy \eqref{sys-lin} modulo \eqref{system}, \eqref{alp-0}, and \eqref{alp-exp} for $r=1,\dots,s$, and differential consequences thereof, so \eqref{system} indeed admits an infinite hierarchy of  nonlocal symmetries with the characteristics $(\xi_s^2 u_2,-\xi_s^1 v_1)$, $s=0,1,2,\dots$. Compatibility of \eqref{fls} with \eqref{system} immediately follows from this,
cf.\ e.g.\ \cite{KVV, O} and references therein. $\hfill\Box$\looseness=-1


In closing note that, using the technique from \cite{AS}, we have arrived at the following result, which is readily verified by straightforward albeit fairly tedious computation (for background on total derivatives and recursion operators seen essentially as B\"acklund auto-transformations for the linearized version of the system under study see e.g.\ \cite{KVV,MS0,MS1,AS}):\looseness=-1 

\begin{proposition}\label{recop}
Let the flow 
\begin{equation}\label{flwUV}
u_{\tau}=U,\quad v_{\tau}=V, 
\end{equation}
where $U$ and $V$ are assumed to be functions of independent variables, jet variables intrinsic to \eqref{system}, and some nonlocal variables over \eqref{system},  be compatible with \eqref{system} and thus define a nonlocal symmetry with the characteristic $(U,V)$ for \eqref{system}.

\looseness=-1

Consider nonlocal variables $\zeta^i$ defined by the relations
\begin{equation}\label{r-o}
[X_i^{(1)},S]=\left([X_i^{(0)},R]\right)^{TD},\quad i=0,1
\end{equation}
where $X_i^{(j)}$ are defined via \eqref{lax-exp}, 
and 
\begin{equation}\label{R-S}
R=-(V/v_1)\partial_{x^1}+(U/u_2)\partial_{x^2},\quad S=\sum\limits_{j=1}^2 \zeta^j\partial_{x^j}
\end{equation}
and the superscript $TD$ means that expressions involving the derivatives of $U$ and $V$ like $U_{x^i}$ and $V_{x^j}$  should be replaced by $D_{x^i}U$ and $D_{x^j}V$, where $D_{x^i}$ are total derivatives.

Then the new flow
\begin{equation}\label{new-flw}
u_{\sigma}=\tilde{U},\quad v_{\sigma}=\tilde{V}, 
\end{equation} 
where
\begin{equation}\label{recUV}
\tilde U=u_2\zeta^2,\quad \tilde{V}=-v_1\zeta^1,
\end{equation}
is also compatible  with \eqref{system} and thus defines another nonlocal symmetry with the characteristic $(\tilde{U},\tilde{V})$ for \eqref{system}.

In other words, the relations \eqref{r-o}, \eqref{R-S}, and \eqref{recUV} define a recursion operator for \eqref{system} associating to any nonlocal symmetry with the characteristic $(U,V)$ for \eqref{system} a new (again in general nonlocal) symmetry with the characteristic $(\tilde{U},\tilde{V})$ 
for \eqref{system}.
\end{proposition}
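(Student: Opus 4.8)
{\em Plan of proof.} The plan, following the technique of \cite{AS}, is to regard the asserted recursion operator as a B\"acklund auto-transformation for the linearized system \eqref{sys-lin} and to reduce its verification to commutator identities for the Lax vector fields, written in the $\lambda$-split form $X_i=X_i^{(0)}-\lambda X_i^{(1)}$ from \eqref{lax-exp}. The structural input will be the Lax-pair compatibility $[X_0,X_1]\equiv 0$ modulo \eqref{system} together with the Jacobi identity for vector fields; the rest is a lengthy but elementary computation, so I only describe its organization.

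The underlying mechanism is that a nonlocal symmetry with characteristic $(U,V)$ is encoded by the vector field $R=-(V/v_1)\partial_{x^1}+(U/u_2)\partial_{x^2}$ of \eqref{R-S} (for the flows \eqref{fls} this $R$ coincides with $Q$ from \eqref{q}--\eqref{alp}), in the sense that \eqref{sys-lin} for $(U,V)$ is equivalent, modulo \eqref{system} and read off via \eqref{lax-exp}, to the commutator relations $[X_i,R]\in\spn\{X_0,X_1\}$ for $i=0,1$: by the Jacobi identity these relations give $[[X_0,R],X_1]+[X_0,[X_1,R]]=[[X_0,X_1],R]\equiv 0$ modulo \eqref{system}, which upon separating the powers of $\lambda$ is precisely \eqref{sys-lin}. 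This is the ``recursion operators as B\"acklund transformations for the linearized system'' viewpoint of \cite{KVV,MS0,MS1,AS}. Observe also that $S=\zeta^1\partial_{x^1}+\zeta^2\partial_{x^2}$ stands to $(\tilde U,\tilde V)=(u_2\zeta^2,-v_1\zeta^1)$ exactly as $R$ stands to $(U,V)$.

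Given this, the proof has three parts. \emph{(a)} One checks that the overdetermined system \eqref{r-o} for $\zeta^1,\zeta^2$ is compatible: matching the coefficients of $\partial_{x^m}$ one solves \eqref{r-o} for $\partial\zeta^j/\partial x^3$ and $\partial\zeta^j/\partial x^4$, and the cross-compatibility $\partial_{x^3}\partial_{x^4}\zeta^j=\partial_{x^4}\partial_{x^3}\zeta^j$ holds identically modulo \eqref{system}, \eqref{sys-lin} and their differential consequences, so that \eqref{r-o} defines a bona fide differential covering over \eqref{system}, enlarging the one already carrying $(U,V)$, with new nonlocal variables $\zeta^j$; here the superscript $TD$ is essential rather than cosmetic, since replacing $U_{x^i},V_{x^j}$ by $D_{x^i}U,D_{x^j}V$ is exactly what lets the contributions of the possibly nonlocal $U,V$ close up. \emph{(b)} One verifies that $S$ again satisfies the commutator relations of the previous paragraph, now modulo \eqref{system}, the covering and \eqref{r-o}: writing $[X_i,S]=[X_i^{(0)},S]-\lambda[X_i^{(1)},S]$, one uses \eqref{r-o} to replace $[X_i^{(1)},S]$ by $([X_i^{(0)},R])^{TD}$ and then eliminates the remaining terms using the Jacobi identity, $[X_0,X_1]\equiv 0$, and the $\lambda$-graded components of the relations $[X_i,R]\in\spn\{X_0,X_1\}$ — this is the content of the shift in the power of $\lambda$ built into \eqref{r-o}. \emph{(c)} By the encoding above, the conclusion of \emph{(b)} means that $(\tilde U,\tilde V)$ solves \eqref{sys-lin} modulo \eqref{system} and the covering, hence is the characteristic of a nonlocal symmetry, and the compatibility of the flow \eqref{new-flw} with \eqref{system} then follows just as in Proposition \ref{sym}. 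Since the construction \eqref{r-o}, \eqref{R-S}, \eqref{recUV} therefore sends the symmetry with characteristic $(U,V)$ to the one with characteristic $(\tilde U,\tilde V)$, it is a recursion operator, as claimed.

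The part I expect to be the main obstacle is \emph{(a)}: verifying, in the explicit coordinates of Theorem \ref{thh}, that the compatibility conditions for \eqref{r-o} hold identically and impose no hidden constraints beyond \eqref{system} and \eqref{sys-lin} is the ``fairly tedious'' computation, and one must track all the nonlocal variables at play (both the $\zeta^j$ and those entering $U,V$) with care, which is exactly why the $TD$-prescription is needed. Once compatibility of \eqref{r-o} is in place, parts \emph{(b)} and \emph{(c)} are organized cleanly by the Jacobi identity and the $\lambda$-grading, so no further conceptual difficulty should arise. $\hfill\Box$
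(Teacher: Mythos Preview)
Your proposal is correct and follows essentially the same route as the paper: the paper gives no proof beyond stating that the result is obtained via the technique of \cite{AS} and ``is readily verified by straightforward albeit fairly tedious computation,'' and your outline is precisely an organized account of how that computation is structured within the \cite{AS} framework (encoding the symmetry in $R$, the $\lambda$-grading of the Lax pair, compatibility of \eqref{r-o} as a covering, and the Jacobi identity). In fact you supply considerably more detail than the paper does.
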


The above 
is a natural generalization of the recursion operator for general heavenly equation found in \cite{AS}. 

Notice that one now can construct (in general nonlocal) symmetries for system \eqref{system} beyond those from Proposition~\ref{sym} by applying the recursion operator from Proposition~\ref{recop} e.g.\ to `simpler' symmetries like, say, those with the characteristics $(u_i,v_i)$, $i=1,\dots,4$, that result from the translation invariance of \eqref{system}.%

\subsection*{Acknowledgments}
The research of WK, and the visit of AS to Warsaw, were supported in part by the grant 2019/34/E/ST1/00188 from the National Science Centre (NCN), Poland.  The research of AS was also supported in part through institutional funding for the development of research organizations (RVO) for I\v{C} 47813059. The authors gratefully acknowledge the support from the above sources.

WK and AS thank respectively Silesian University in Opava and Institute of Mathematics of Polish Academy of Sciences in Warsaw for warm hospitality extended to them in the course of their visits to the respective institutions. 

Some of the computations in the paper were performed  employing the package {\em Jets} \cite{BM} for Maple\textsuperscript{\textregistered} whose use is hereby gratefully acknowledged.

\end{document}